\newtheorem{theorem}{Theorem}[section]
\newtheorem{lemma}[theorem]{Lemma}
\def\eqref#1{(\ref{#1})}
\def\<<{\prec}
\begin{document}

\title{{
\bf Sensitive open map semigroups on Peano continua having a free arc }\footnotetext{*Corresponding author.}
\author{Suhua Wang$^{a,*}$, Enhui Shi$^{b}$, Pingping Dong$^{c}$ \vspace{3mm}\\
\small {\sl $^a$Zhangjiagang Campus, } \\
\small{\sl Jiangsu University of Science and Technology,}\\
\small{\sl Zhangjiagang, Jiangsu 215600, P.R.China}\\
\small{ E-mail: wangsuhuajust@163.com}\vspace{2mm}\\
\small {\sl $^b$Department of Mathematics,}\\
\small {\sl Soochow University, Suzhou, Jiangsu 215006,
P.R.China}\\
\small{ E-mail: ehshi@suda.edu.cn}\vspace{2mm}\\
\small {\sl $^c$Department of Mathematics,}\\
\small {\sl Soochow University, Suzhou, Jiangsu 215006,
P.R.China}\\
\small{ E-mail: 1690475436@qq.com}
}
}
\date{} \maketitle

{\narrower\small\noindent{\bf Abstract:}\quad  Let $X$ be a Peano continuum having a free arc and let $C^0(X)$ be the semigroup of continuous self-maps of $X$. A subsemigroup $F\subset C^0(X)$ is said to be sensitive, if there is
some constant $c>0$ such that for any nonempty open set $U\subset
X$, there is some $f\in F$ such that the diameter ${\rm
diam}(f(U))>c$. We show that if $X$ admits a sensitive commutative subsemigroup $F$ of $C^0(X)$ consisting of continuous open maps, then either $X$ is an arc, or $X$ is a circle.\vspace{2mm}

\noindent{\bf Keywords}: Sensitivity; semigroup action; open map; Peano continuum; free arc.

\noindent{\bf 2010 MSC}: 54H20, 37B45, 37E99.

}\vspace{5mm}

%%%%%%%%%%%%%%%%%%%%%%%%%%%%%%%%%%%%%%%%
\section{Introduction}
%%%%%%%%%%%%%%%%%%%%%%%%%%%%%%%%%%%%%%%%
\setlength{\parskip}{1mm}

Let $X$ be a compact metric space with metric $d$, and let $C^0(X)$ be the set
of all continuous self-maps of $X$. Then $C^0(X)$ is a semigroup
under the composition of maps. If $F$ is a subsemigroup of
$C^0(X)$, then we call the pair $(X, F)$ a  {\it dynamical system}
on $X$. If $F$ is generated by a single map $f$, that is $F=\{f^n:
n=0, 1, 2, \cdots\}$, then we call $(X, F)$ a {$\mathbb Z_+$-\it
dynamical system} which is also denoted by $(X, f)$. If there is
some constant $c>0$ such that for any nonempty open set $U\subset
X$, there is some $f\in F$ such that the diameter ${\rm
diam}(f(U))>c$, then $F$ is said to be {\it sensitive} or {\it
$c$-sensitive}, and $c$ is said to be a {\it sensitivity constant}
for $F$. For any $f\in C^0(X)$, if the semigroup $F=\{f^n: n=0, 1,
2, \cdots\}$ is sensitive, then $f$ is called {\it sensitive}.

Sensitivity plays an important role in chaos theory. Although there is no universally accepted mathematical definition of chaos, sensitivity is widely understood as being the central idea of chaos. For example, in 1980 Auslander and Yorke introduced their definition of chaos by associating sensitivity with topological transitivity (see \cite{AY}). Later Devaney introduced a famous definition of chaos called Devaney chaos, in which a system is chaotic if it is sensitive, topologically transitive and has a dense set of periodic points (see \cite{De}). Sensitivity has been extensively studied by many authors(see e.g. \cite{AK,BBC,Bl,GW,Gu,HY,LTY,Su,YY}).

By a {\it continuum}, we mean a compact connected metric space. An {\it arc} is a continuum homeomorphic to the closed interval $[0,1]$ and a {\it circle} is a continuum homeomorphic to the unit circle ${\mathbb S}^1=\{e^{{\bf i}\theta}:0\leq \theta \leq 2\pi\}$ in the complex plane. If a continuum is locally connected, then it
is called a {\it Peano continuum}.
Let $X$ be a metric space and let $J$ be a subset of $X$. We say
that $J$ is a {\it free open interval} of $X$ if $J$ is  open and
is homeomorphic to the open interval $(0,1)$. If the closure
$\overline J$ of a free open interval $J$ of $X$ is homeomorphic
to $[0,1]$, then $\overline J$  is called a {\it free arc} of $X$.
Notice that a space $X$ contains a free arc if and only if it
contains a free open interval.

In the study of topological dynamical systems and continuum theory, one is interested in the question: given a continuum $X$, dose it admit a sensitive subsemigroup $F\subset C^0(X)$? This depends on the topology of $X$ and the algebraic structure of $F$.  It is well known that arcs and circles admit no sensitive homeomorphism. Mai and Shi in \cite{MS} proved a stronger result: graphs admit no sensitive commutative subsemigroup of $C^{0}(X)$ consisting of homeomorphisms. In the same paper, they also gave an example of a sensitive homeomorphism on a Suslinian continuum having a free arc, which answers a question posed by Kato in \cite{Kato}. Notice that the Suslinian continuum discussed in this example is not locally connected. When we consider Peano continua, the consequence is different.
The notion of Peano continuum having a free arc is a natural generalization of arcs, circles and graphs. Early in 1988, Kawamura showed that every Peano continuum having a free arc admits no expansive homeomorphism in \cite{Ka}. Furthermore, in \cite{MS1} Mai and Shi showed that if $X$ is a Peano continuum having a free arc and $F$ is a commutative semigroup consisting of homeomorphisms on $X$, then $F$ is not sensitive. This result generalized the main results in \cite{Ka,MS}. However, it is not true if we replace the condition ``homeomorphism" by ``open map" because there exist sensitive open maps on arcs and circles. For example, the tent map on the closed interval $[0,1]$ is a sensitive open map, and the circle map $f: {\mathbb S}^1\to {\mathbb S}^1$, $e^{{\bf i}\theta}\mapsto e^{2{\bf i}\theta}$, for all $e^{{\bf i}\theta}\in {\mathbb S}^1$ is also sensitive. In fact, Shi et al. in \cite{SWM} proved that if a Peano continuum having a free arc admits a sensitive open map, then it is either an arc, or a circle, and Mai et al. in \cite{MSW} proved that if a Peano continuum having a free arc admits a sensitive commutative semigroup which consists of local homeomorphisms, then it must be a circle.
 In this paper, we will consider the further question: what can we say about the structure of $X$ provided that $X$ is a Peano continuum having a free arc and $F\subset C^0(X)$ is a sensitive commutative semigroup consisting of open maps on $X$?
We prove the following theorem:

\begin{theorem}\label{T1.1} Let $X$ be a Peano continuum having a
free arc. If $X$ admits a sensitive commutative subsemigroup $F$ of $C^0(X)$
consisting of continuous open maps, then either $X$ is
an arc, or $X$ is a circle.
\end{theorem}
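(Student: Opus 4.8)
The plan is to show that the hypotheses force every point of $X$ to have a neighborhood homeomorphic to $(0,1)$ or to $[0,1)$, i.e.\ that $X$ is a compact connected $1$-manifold with boundary; since such a manifold is an arc or a circle, this yields the theorem. First I would record two elementary consequences of openness. Because each $f\in F$ is a continuous open self-map of the compact connected space $X$, the set $f(X)$ is both closed (compactness) and open, so $f$ is \emph{surjective}. Next I would single out the \emph{free part} $\M=\{x\in X: x \text{ has an open neighborhood that is a free open interval}\}$, together with the set $E$ of order-one points (local endpoints). Then $\M$ is open, nonempty (it contains the interior of the given free arc), and $\M\cup E$ is exactly the set of $1$-manifold (with boundary) points of $X$. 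If $\M\cup E=X$ we are done, so the whole difficulty is to rule out the existence of a nonempty closed \emph{singular set} $S:=X\setminus(\M\cup E)$ consisting of branch points (local order $\ge 3$) and points admitting no $1$-dimensional neighborhood.

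The technical heart of the argument is the following \emph{manifold-preservation lemma}: for each open $f\in F$ one has $f(\M)\subseteq \M\cup E$, and the restriction of $f$ to a free open interval is locally monotone, folding at most over points of $E$. I would prove this by a local path argument. If $x\in\M$ had image $f(x)=b\in S$ with $b$ of order $\ge 3$, pick a small subinterval $I\ni x$ inside a free-interval neighborhood with $f(I)$ contained in a star neighborhood $N$ of $b$. Since $I$ is open in $X$ and $f$ is open, $f(I)$ is open and contains $b$, hence must contain an initial segment of every prong of $N$; but $f(I)$ is the continuous image of an interval, and analyzing $f$ near the points of $f^{-1}(b)\cap I$ shows that such a path can accumulate on at most two prongs in an open fashion, so $f(I)$ cannot be a neighborhood of $b$, a contradiction. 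The case where $b$ has no $1$-dimensional neighborhood is excluded the same way, since an open continuous image of an interval is $1$-dimensional and so cannot contain a $2$-disk neighborhood of $b$. The same local analysis shows that over interior ($\M$-type) points $f$ is a local homeomorphism, folds being possible only over $E$; thus on the $1$-manifold $\M$ the maps of $F$ behave exactly like the locally monotone (tent-map-type) maps studied in \cite{SWM}.

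Assume now, for contradiction, that $S\neq\emptyset$. The lemma gives $f^{-1}(S)\subseteq S$ for all $f\in F$, so $S$ is backward invariant; combining this with surjectivity ($f(f^{-1}(S))=S$) and commutativity, I would extract a nonempty closed $F$-invariant subset $S_*\subseteq S$ on which the whole semigroup acts. Tracking the given free arc $\overline J$, maximality of $\M$ together with connectedness and compactness of $X$ forces at least one endpoint $a$ of $\overline J$ to lie in $S$ (otherwise both endpoints are endpoints or order-two points and $X$ is already an arc or a circle). I would then derive the contradiction with $c$-sensitivity by exhibiting small open sets straddling the attaching point $a$ whose entire $F$-orbit is trapped in a region of diameter at most $c$: by the lemma every image stays in $\M\cup E$ and is pinned to the $F$-orbit of the local branch structure at $a$, and commutativity forces that orbit to be recurrent (finite on the relevant combinatorial data), capping the diameters of the images below the sensitivity constant. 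This contradicts $c$-sensitivity, so $S=\emptyset$ and $X$ is an arc or a circle.

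The main obstacle I anticipate is precisely the last step: converting the purely local branching constraint supplied by the manifold-preservation lemma into a \emph{uniform} upper bound on $\diam f(U)$ over all $f\in F$ for a fixed small $U$ near a singular point. Openness alone does not prevent expansion (the tent map on an arc is open and sensitive), so the bound must come from the rigidity of the branch/attaching structure under the \emph{commuting} open maps, exploiting that the folded-local-homeomorphism behavior on $\M$ cannot propagate a small neighborhood of a higher-order point to a large diameter without violating openness at $S_*$. Making this trapping quantitative—ideally by reducing the semigroup action near $S_*$ to the single-open-map situation and invoking the method of \cite{SWM}, with commutativity playing the role it does in \cite{MSW} and \cite{MS1}—is where the substantive work lies.
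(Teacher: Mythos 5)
Your preliminary reductions are sound and essentially parallel the paper's Sections 2--3: openness plus Lemma~\ref{L3.1}-type reasoning does show that images of free open intervals contain no simple triod, that endpoints map to endpoints (Lemma~\ref{L3.2}), and hence that each open map carries a maximal free open interval injectively into another one (Lemmas~\ref{L3.4} and~\ref{L3.5}). But the step you yourself flag as the ``main obstacle'' is not a technical loose end --- it is the entire content of the theorem, and the mechanism you sketch for it (trapping the whole $F$-orbit of a small open set near a singular point inside a region of diameter at most $c$, via some recurrence forced by commutativity) is not substantiated and is not how the contradiction actually arises. Nothing in the hypotheses pins the images of a neighborhood of a branch point to a bounded region: the maps are merely open and commuting, the action on the singular set $S_*$ need not be finite or recurrent in any useful combinatorial sense, and sensitivity itself guarantees that \emph{some} $f\in F$ blows every such neighborhood up past $c$. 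So the claim ``capping the diameters of the images below the sensitivity constant'' is exactly what must be proved, and no argument is offered.

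The paper's actual mechanism is quite different and is worth internalizing. One fixes a finite cover $U_1,\dots,U_n$ by small arcwise connected open sets and introduces the functional $S(f_1,\dots,f_m)=\sum_{i=1}^{n}d\bigl(U_i,\cup_{j=1}^{m}f_j(U_i)\bigr)$, which is trapped in $[0,nr]$. Sensitivity is then used \emph{positively}: starting from a free arc one repeatedly stretches subarcs to uniform length $c$, and compactness of the hyperspace $(2^X,d_H)$ produces two stages $p<q$ whose middle arcs are $\delta$-close; the composite $f_{m+1}=g_{q-1}\cdots g_p$ then maps an arc off itself (a ``jumping arc'') while fixing a point $w$, and commutativity (via Claim~B) propagates the jumping property through all previously chosen $f_1,\dots,f_m$. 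Each new map forces $S$ to drop by more than $\delta$, and after finitely many steps $S$ would be negative --- the contradiction. This counting argument on a bounded, strictly decreasing quantity is the idea missing from your proposal; without it (or a genuine substitute), the proof does not close.
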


This theorem generalizes the corresponding results in \cite{Ka,MSW,SWM}.

%%%%%%%%%%%%%%%%%%%%%%%%%%%%%%%%%%%%%%%%%%%%%%%%%%%%%%%%%%
\section{The topological structure of Peano continua having a free arc}
%%%%%%%%%%%%%%%%%%%%%%%%%%%%%%%%%%%%%%%%%%%%%%%%%%%%%%%%%%
Throughout this paper, by the symbol ${\mathbb N}$, we mean the set of positive integers. Let $X$ be a compact metric space with metric $d$. For a subset $A$ of $X$, we use the symbols $\overline{A}$ and ${\rm Bd}_X(A)$ to denote the closure and the boundary of $A$ in $X$ respectively. The cardinality of $A$ is denoted by $|A|$. Define the diameter of $A$ by ${\rm diam}(A)={\rm sup}\{d(a,b): a,b\in A\}$. If $\epsilon>0$ and $A,B\subset X$, let $D_{\epsilon}(A)=\{x\in X: d(x,a)<\epsilon {\rm \ for\ some\ }a\in A\}$ and let $d(A,B)={\rm inf}\{d(x,y):x\in A,y\in B\}$. Suppose that $X$ is a continuum, a point $x\in X$ is said to be an {\it endpoint} of $X$ provided that for each open set $U$ with $x\in U$, there exists an open set $V$ such that $x\in V\subset U$ and ${\rm Bd}_X(V)$ consists of precisely one point. The set of all endpoints of $X$ is denoted by ${\rm End}(X)$. A subset $A$ in $X$ is {\it uniquely arcwise connected} if for any $x\neq y\in A$, there is a unique arc $[x,y]_A\subset A$
connecting $x$ and $y$; denote $[x,y)_A=(y,x]_A=[x,y]_A-\{y\}$ and $(x,y)_A=[x,y]_A-\{x,y\}$.

A continuum is called {\it nondegenerate} if it has at least two points. Let us recall some properties of Peano continua. Every nondegenerate Peano continuum is arcwise connected (see \cite[Theorem 8.23]{Na}); every open subset of a Peano continuum is locally arcwise connected; every Peano continuum is locally arcwise connected (see \cite[Theorem 8.25]{Na}). The following lemma comes from \cite[Exercise 8.30]{Na}.

\begin{lemma}\label{L2.1}
Let $X$ be a Peano continuum. For each $\epsilon>0$,
there exists a $\delta=\delta(\epsilon)\in (0,\epsilon/2]$ such
that, for any $x,y\in X$ with $0<d(x,y)\leq \delta$, there always
exists an arc $A$ with endpoints $x$ and $y$ such that ${\rm
diam}(A)<\epsilon$.
\end{lemma}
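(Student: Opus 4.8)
The plan is to derive this uniform statement from the pointwise local arcwise connectivity of a Peano continuum, combined with a compactness argument via the Lebesgue number lemma. The underlying idea is that local arcwise connectivity supplies, around each point, an arcwise connected neighborhood of controlled diameter; compactness then converts the resulting pointwise modulus into a single $\delta$ that works simultaneously for all nearby pairs $x,y$.

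First I would fix $\epsilon>0$ and build the local neighborhoods. Since $X$ is a Peano continuum it is locally arcwise connected (see \cite[Theorem 8.25]{Na}), so each point has a neighborhood basis of open arcwise connected sets. Hence for each $p\in X$ there is an open arcwise connected set $W_p$ with $p\in W_p\subset D_{\epsilon/3}(\{p\})$. Because any two points of $D_{\epsilon/3}(\{p\})$ lie within distance less than $2\epsilon/3$ of each other, we have ${\rm diam}(W_p)\le 2\epsilon/3<\epsilon$. Consequently, any two distinct points of $W_p$ are joined by an arc lying in $W_p$, and every such arc has diameter strictly less than $\epsilon$.

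Next I would invoke compactness. The family $\{W_p:p\in X\}$ is an open cover of the compact metric space $X$, so by the Lebesgue number lemma there is $\delta_0>0$ such that every subset of $X$ of diameter less than $\delta_0$ is contained in some $W_p$. Set $\delta=\min\{\delta_0/2,\ \epsilon/2\}$, so that $\delta\in(0,\epsilon/2]$. Now suppose $x,y\in X$ satisfy $0<d(x,y)\le\delta$. Then the two-point set $\{x,y\}$ has diameter $d(x,y)\le\delta<\delta_0$, so $\{x,y\}\subset W_p$ for some $p$. Since $x\neq y$ and $W_p$ is arcwise connected, there is an arc $A\subset W_p$ with endpoints $x$ and $y$, and ${\rm diam}(A)\le{\rm diam}(W_p)<\epsilon$, as required.

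This is essentially a standard ``local-to-uniform'' passage, so I do not anticipate a serious obstacle; the points needing care are both minor. First, one must choose the radius of the initial ball (here $\epsilon/3$) small enough that arcs inside $W_p$ obey the strict bound ${\rm diam}(A)<\epsilon$ rather than merely $\le\epsilon$. Second, one must shrink the Lebesgue number (replacing $\delta_0$ by $\delta_0/2$) so that a pair at distance exactly $\delta$ still forms a set of diameter strictly below $\delta_0$, and therefore still lies in a single cover element. The only genuine input beyond elementary compactness is the local arcwise connectivity of Peano continua; everything else is bookkeeping with diameters.
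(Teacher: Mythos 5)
Your proof is correct. Note that the paper itself offers no proof of Lemma~\ref{L2.1}: it simply cites \cite[Exercise 8.30]{Na}, so there is nothing internal to compare against. Your argument---covering $X$ by open arcwise connected sets $W_p$ of diameter at most $2\epsilon/3$, extracting a Lebesgue number $\delta_0$, and halving it so that a pair at distance exactly $\delta$ still has diameter strictly below $\delta_0$---is precisely the standard local-to-uniform solution that the cited exercise intends, and all the quantitative details check out: $\delta=\min\{\delta_0/2,\epsilon/2\}\in(0,\epsilon/2]$, the two-point set $\{x,y\}$ lands in a single $W_p$, and any arc in $W_p$ joining $x\neq y$ has diameter at most $2\epsilon/3<\epsilon$, so the strict bound required by the lemma holds. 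The only input you use beyond compactness is the local arcwise connectedness of Peano continua, which the paper itself records (via \cite[Theorem 8.25]{Na}) just before stating the lemma, so your proof is fully compatible with the paper's framework.
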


Let $A$ be an arc. Fix a homeomorphism $h:A\to [0,1]$. Let $\prec$
be an ordering on $A$ which is defined by  $x\prec y$ if and only if
$h(x)<h(y)$ for all $x,y\in A$. Then $\prec$ is called a {\it natural ordering} on $A$ induced by
 $h$. The following lemma can be seen in \cite{MSW}.

\begin{lemma}\label{L2.2}
Let $X$ be a Peano continuum having a free arc and let $A$ be a
free arc of $X$ with endpoints $a$ and $b$. Suppose
$\{x,y\}\subset A$ are such that $a\prec x\prec y\prec b$ with
respect to a natural ordering $\prec$ on $A$. Let $\epsilon={\rm
min}\{{\rm diam}([a,x]_A), {\rm diam}([y,b]_A)\}$, and let
$\delta=\delta(\epsilon)$ be as in the statement of
Lemma~\ref{L2.1}, then $d([x,y]_A, X-(a,b)_A)>\delta$.
\end{lemma}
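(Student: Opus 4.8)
The plan is to argue by contradiction. Suppose $d([x,y]_A, X-(a,b)_A)\le\delta$. Since $[x,y]_A$ is a closed subarc (hence compact) and $X-(a,b)_A$ is closed in the compact space $X$ (hence compact, and nonempty as it contains $a,b$), the distance is realized: there exist $p\in[x,y]_A$ and $q\in X-(a,b)_A$ with $d(p,q)\le\delta$. Because $p\in(a,b)_A$ while $q\notin(a,b)_A$, we have $p\ne q$, so $0<d(p,q)\le\delta$. Lemma~\ref{L2.1} then furnishes an arc $B$ with endpoints $p$ and $q$ and $\diam(B)<\epsilon$. The goal is to show this is impossible: I claim $B$ is forced to contain either the whole subarc $[a,x]_A$ or the whole subarc $[y,b]_A$, each of which has diameter at least $\epsilon$ by the very definition of $\epsilon$.

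To see this, I would first record the topological role of the free arc. Since $A$ is a free arc, $(a,b)_A$ is open in $X$ and its closure is $A=[a,b]_A$; consequently ${\rm Bd}_X((a,b)_A)=\{a,b\}$. Now parametrize $B$ as $B\colon[0,1]\to X$ with $B(0)=p$ and $B(1)=q$, set $t^{*}=\sup\{t\in[0,1]:B([0,t])\subseteq(a,b)_A\}$, and put $w=B(t^{*})$. As $p\in(a,b)_A$ is an interior point while $q$ lies in the closed complement, this first-exit point $w$ exists; being a limit of points of $(a,b)_A$ that does not itself lie in the open set $(a,b)_A$, it sits on the boundary, so $w\in\{a,b\}$. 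Moreover the initial subarc $B'=B([0,t^{*}])$ is a connected set contained in $\overline{(a,b)_A}=A$.

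Because $B'$ is a connected subset of the arc $A$ containing the two points $p$ and $w$, it must contain the entire subarc of $A$ joining them. If $w=a$, then $[a,p]_A\subseteq B'\subseteq B$; since $a\prec x\preceq p$ gives $[a,x]_A\subseteq[a,p]_A$, we obtain $\diam(B)\ge\diam([a,x]_A)\ge\epsilon$, contradicting $\diam(B)<\epsilon$. The case $w=b$ is symmetric and yields $\diam(B)\ge\diam([y,b]_A)\ge\epsilon$, again a contradiction. In either case the assumption fails, whence $d([x,y]_A, X-(a,b)_A)>\delta$. The one step demanding care — and the crux of the argument — is the passage from ``$B$ has small diameter'' to ``$B$ contains a full subarc of $A$''; this rests entirely on the freeness of $A$, which confines any exit of $B$ to the two endpoints $a,b$ and forces the connected initial piece of $B$ to run along $A$ itself rather than shortcutting through the rest of $X$.
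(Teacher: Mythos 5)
Your proof is correct and is essentially the standard argument for this lemma; the paper itself does not prove it but defers to \cite{MSW}, where the same route is taken. The key steps match: realize the distance between the two compact sets, apply Lemma~\ref{L2.1} to get an arc $B$ of diameter less than $\epsilon$ joining $p\in[x,y]_A$ to $q\notin(a,b)_A$, and use the freeness of $A$ (so that ${\rm Bd}_X((a,b)_A)=\{a,b\}$) to force the first exit of $B$ from $(a,b)_A$ to occur at $a$ or $b$, whence $B$ contains $[a,x]_A$ or $[y,b]_A$ and has diameter at least $\epsilon$, a contradiction.
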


A continuum $Y$ is said to be a {\it simple troid} if it is homeomorphic to the subset $\{re^{\mathbf{i}\theta}: r\in [0,1], \theta=\pi/3, \pi,
5\pi/3\}$ of the complex plane. If $Y$ is a simple troid, then the
unique point $o\in Y$ such that $Y\setminus \{o\}$ has three
components is called the {\it center of $Y$}.

\begin{lemma}\label{L2.3}
Let $X$ be a nondegenerate Peano continuum, and let $a\in X$ be such that there is a neighborhood $U$ of $a$ which contains no simple troid . Then one of the following statements holds:

(i) If $a\in {\rm End}(X)$, then there exists a connected open neighborhood $V$ of $a$ such that $V$ is homeomorphic to the half-open interval $[0,1)$;

(ii) If $a\notin {\rm End}(X)$, then there exists a connected open neighborhood $V$ of $a$ such that $V$ is homeomorphic to the open interval $(0,1)$.
\end{lemma}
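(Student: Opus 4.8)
The plan is to reduce this essentially local statement to the classical global classification of arcs and circles among Peano continua. The first step is to replace the given neighborhood $U$ by a small, tame piece of $X$ around $a$. Using local connectivity together with the standard fact that a Peano continuum has a neighborhood basis (at each point) of connected open sets whose closures are again Peano continua (see \cite{Na}; concretely, one may equip $X$ with a convex metric and take small closed balls), I would fix a connected open set $W$ with $a\in W$ and $\overline W\subset U$ such that $\overline W$ is a Peano continuum. Since $X$ is nondegenerate it has no isolated points, so $\overline W$ is nondegenerate; and since $\overline W\subset U$, the continuum $\overline W$ contains no simple triod.

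The decisive step is then to apply the classical theorem that a nondegenerate Peano continuum containing no simple triod is either an arc or a circle (equivalently, every point has Menger--Urysohn order at most two). This at once gives that $\overline W$ is an arc or a circle, and this is where the genuine content lies; note that local connectedness is indispensable here, since triod-free non-Peano continua (for instance solenoids) are neither arcs nor circles.

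It then remains to read off the local model at $a$. As $W$ is open in $X$ and contained in $\overline W$, it is a connected open subset of the arc or circle $\overline W$ containing $a$, hence a subinterval. If $\overline W$ is a circle, or $\overline W$ is an arc with $a$ in its interior, then $W$ is homeomorphic to $(0,1)$, and every sufficiently small neighborhood of $a$ has a two-point boundary, so $a\notin \mathrm{End}(X)$. If $\overline W$ is an arc and $a$ is one of its two endpoints, then $W$ is homeomorphic to $[0,1)$ and the neighborhoods of $a$ of the form $[0,\epsilon)$ have one-point boundary, so $a\in \mathrm{End}(X)$. These two cases exhaust the possibilities and yield opposite verdicts on endpoint-ness; hence if $a\in\mathrm{End}(X)$ the second case must hold and $V=W\cong[0,1)$ proves (i), while if $a\notin\mathrm{End}(X)$ the first case must hold and $V=W\cong(0,1)$ proves (ii).

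The main obstacle is the classification theorem invoked in the second step; with it in hand the argument is short. Should a direct citation be unavailable, one would have to prove it from scratch --- showing that the absence of simple triods forces every point of the Peano continuum $\overline W$ to have order at most two and then classifying such continua --- which is the substantial part of the work. A secondary technical point is the very first reduction, namely securing a neighborhood $W$ whose \emph{closure} is a Peano continuum (not merely connected); for this I would rely on the convex-metric/basis result for Peano continua rather than attempt an ad hoc construction.
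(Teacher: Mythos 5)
Your argument is essentially correct, but note that the paper gives no proof of this lemma at all: it is stated with the remark that the proof is ``similar to that of Lemma 3.1 in \cite{SWM}'' and omitted. That reference proceeds by a direct local construction, using local arcwise connectedness to assemble the arcs through $a$ inside $U$ by hand and extracting a simple triod from any failure of linearity. You instead reduce to two classical global facts: that each point of a Peano continuum has arbitrarily small connected open neighborhoods whose closures are Peano continua (via the Bing--Moise convex metric theorem), and that a nondegenerate Peano continuum containing no simple triod is an arc or a simple closed curve. Both routes are legitimate; yours is shorter on the page but outsources to the classification theorem exactly the hands-on work that the direct local proof performs in miniature, so as you yourself note, the proof is only as self-contained as that citation. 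Two small points should be tightened. First, in the last step $W$ itself need not be homeomorphic to the advertised model: if $\overline W$ is an arc $[0,1]$, the dense connected relatively open set $W$ could be $[0,1)$ with $a$ an interior point, and if $W=\overline W$ then $W$ is clopen and $X=\overline W$ is itself an arc or circle; in all such cases you should pass to a smaller basic subinterval $V\subset W$ around $a$ rather than take $V=W$, which costs nothing since only the existence of some neighborhood is claimed. Second, the assertion that a point with a neighborhood homeomorphic to $(0,1)$ cannot be an endpoint deserves a line of justification: for any open $V\ni a$ contained in a slightly smaller subinterval, the component of $a$ in $V$ is an open interval whose two ends lie in ${\rm Bd}_X(V)$, so $|{\rm Bd}_X(V)|\geq 2$. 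With these repairs the proposal is a complete and valid alternative to the (unstated) proof the paper points to.
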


The proof of Lemma~\ref{L2.3} is similar to that of Lemma 3.1 in \cite{SWM}. For simplicity, we omit the proof here.
The next lemma follows directly from Lemma~\ref{L2.3}.

\begin{lemma}\label{L2.4}
Let $X$ be a nondegenerate Peano continuum which is neither an arc nor a circle. Suppose that $I$ is an open connected subset of $X$ which contains no simple troid. Then one of the following statements holds:

(i) $I$ is homeomorphic to the open interval $(0,1)$;

(ii) $I$ is homeomorphic to the half-open interval $[0,1)$.
\end{lemma}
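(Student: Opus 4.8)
The plan is to show that $I$ is a connected $1$-manifold with boundary and then to invoke the classification of such manifolds, discarding the two compact models by the openness of $I$.

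First I would fix an arbitrary point $a\in I$. Since $I$ is open and contains no simple triod, $I$ itself serves as a neighborhood $U$ of $a$ containing no simple triod, so Lemma~\ref{L2.3} applies. It yields a connected open neighborhood of $a$ in $X$ that is homeomorphic either to $(0,1)$, when $a\notin{\rm End}(X)$, or to $[0,1)$, when $a\in{\rm End}(X)$, with $a$ corresponding to an interior point in the former case and to the boundary point $0$ in the latter. Because these are neighborhoods in $X$ rather than in $I$, I would intersect each of them with the open set $I$ and pass to the connected component containing $a$; a connected relatively open subset of $(0,1)$ is again an open interval, and a connected relatively open subset of $[0,1)$ containing $0$ has the form $[0,\epsilon)$. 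In either case I obtain a neighborhood of $a$ that is relatively open in $I$ and homeomorphic to $(0,1)$ or to $[0,1)$. As $a$ was arbitrary, every point of $I$ admits such a chart, so $I$ is a connected topological $1$-manifold with boundary; it is separable and metrizable since $X$ is a compact metric space.

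Next I would appeal to the classification of connected $1$-manifolds with boundary: up to homeomorphism the only possibilities are $(0,1)$, $[0,1)$, $[0,1]$, and ${\mathbb S}^1$. It then remains to rule out the two compact models. If $I$ were homeomorphic to $[0,1]$ or to ${\mathbb S}^1$, then $I$ would be compact, hence closed in the metric space $X$; being also open and nonempty, $I$ would be clopen, and the connectedness of $X$ would force $I=X$. But then $X$ would be an arc or a circle, contradicting the hypothesis. Therefore $I$ is homeomorphic to $(0,1)$ or to $[0,1)$, which is precisely the dichotomy (i)--(ii).

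The only genuinely delicate point is the passage from local interval charts to the global classification; everything else is immediate from Lemma~\ref{L2.3} and the elementary clopen argument. If one wished to avoid quoting the $1$-manifold classification as a black box, the same conclusion could be reached directly by covering $I$ with a countable family of overlapping interval charts and assembling a homeomorphism of $I$ onto a subinterval of $\mathbb{R}$ by a standard inductive gluing, using once more that compactness of $I$ would force $I=X$.
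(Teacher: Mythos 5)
Your proof is correct and follows the route the paper itself indicates: the authors omit the argument entirely, remarking only that the lemma ``follows directly from Lemma~\ref{L2.3}'', and your write-up is exactly the natural way to fill that in (local charts from Lemma~\ref{L2.3}, the classification of connected $1$-manifolds with boundary, and the clopen argument to exclude the compact models). No gaps; the care you take in passing to the component of $V\cap I$ containing $a$ is precisely the detail the paper leaves implicit.
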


A free open interval $J$ in a continuum $X$ is called {\it maximal}, if there is no free open interval in $X$ which properly includes $J$. If $X$ is a Peano continuum, then by the local connectedness of $X$, it is easy to see that there are three types for the closure of a maximal free open interval $J$ in $X$:

(i) $\overline{J}=J\cup \{a,b\}$ is an arc, where $a,b\in X\setminus
 J$ are endpoints of $\overline{J}$, and $\{a,b\}\cap {\rm End}(X)\neq\emptyset$;

(ii) $\overline{J}=J\cup \{a,b\}$ is an arc, where $a,b\in X\setminus
 J$ are endpoints of $\overline{J}$, and $\{a,b\}\cap {\rm End}(X)=\emptyset$;

(iii) $\overline{J}=J\cup \{c\}$ is a circle, for some $c\in X\setminus
 J$.

 In particular, if the closure of $J$ is the first type, then we call $J$ a {\it maximal free open interval of Type I}.

\begin{lemma}\label{L2.5}
Let $X$ be a nondegenerate Peano continuum having a free arc, which is neither an arc nor a circle. Then for any free open interval $I\subset X$, there is a unique maximal free open interval $J$ containing $I$.
\end{lemma}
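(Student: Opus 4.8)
The plan is to produce the maximal interval and establish its uniqueness simultaneously by taking a union of all candidates. Concretely, I would let $\mathcal{J}$ be the family of all free open intervals $K\subseteq X$ with $I\subseteq K$, note that $\mathcal{J}\neq\emptyset$ since $I\in\mathcal{J}$, and set $J=\bigcup_{K\in\mathcal{J}}K$. Then $J$ is open as a union of open sets, and it is connected because every member of $\mathcal{J}$ contains the nonempty connected set $I$, so all the sets $K$ share the common points of $I$. The whole proof then reduces to showing that $J$ is itself a free open interval: once this is known, $J$ is the largest element of $\mathcal{J}$, and hence the unique free open interval that is both maximal and contains $I$, since any maximal $K\in\mathcal{J}$ satisfies $K\subseteq J$ and cannot be properly contained in the free open interval $J$.

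To recognize $J$ as a free open interval I would invoke Lemma~\ref{L2.4}, which applies to $X$ (a nondegenerate Peano continuum that is neither an arc nor a circle) and to the open connected set $J$, provided $J$ contains no simple troid. Proving the latter is the heart of the argument and the main obstacle. The key fact I would isolate as a sublemma is that the center of a simple troid can never lie in a free open interval. Indeed, if $o$ were the center of a simple troid $T\subseteq J$, then $o$ would belong to some $K\in\mathcal{J}$, and $K$ is an open neighborhood of $o$ in $X$ homeomorphic to $(0,1)$; choosing a small tripod neighborhood $T'\subseteq K\cap T$ of $o$ in $T$ and transporting it by the homeomorphism $K\to(0,1)$, I would obtain a connected subset of $(0,1)$ the complement of whose one interior point has three components. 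Since a connected subset of $(0,1)$ is an interval, and deleting one of its points leaves at most two components, this is impossible. Hence no such $o$ exists, and $J$ contains no simple troid.

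With that in hand, Lemma~\ref{L2.4} gives that $J$ is homeomorphic either to $(0,1)$ or to $[0,1)$, and it remains to exclude the half-open case. I would argue that if $J$ were homeomorphic to $[0,1)$ with the endpoint corresponding to a point $p$, then $p$ would lie in some $K\in\mathcal{J}$; since $K$ is open in $X$ and homeomorphic to $(0,1)$, the point $p$ would have a two-sided interval neighborhood inside $J$, contradicting that the image of $p$ is the one-sided endpoint $0$ of $[0,1)$ (removing it from a small connected neighborhood of the image leaves one component rather than two). Therefore $J$ is homeomorphic to $(0,1)$, that is, $J$ is a free open interval containing $I$. As explained above, this yields at once the existence of a maximal free open interval containing $I$ together with its uniqueness. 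I expect the only delicate point to be the sublemma on troid centers, which rests on the purely local fact that a point of a free open interval has a neighborhood that is an honest open arc; the remainder is bookkeeping with Lemma~\ref{L2.4} and the definition of maximality.
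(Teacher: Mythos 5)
Your proposal is correct and follows essentially the same route as the paper: both take $J$ to be the union of all free open intervals containing $I$, observe that $J$ is open, connected, contains no simple troid and no endpoint of $X$ (which you verify by the same local argument inside a chart $K\cong(0,1)$ that the paper leaves as ``clearly''), and then apply Lemma~\ref{L2.4} to conclude $J$ is a free open interval, with maximality and uniqueness immediate from the construction. Your write-up simply supplies the details the paper omits.
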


\begin{proof}
Let ${\mathcal P}=\{P: P {\rm \ is\ a\ free\ open\ interval\
containing\ } I\}$. Set $J=\underset{P\in {\mathcal
P}}{\cup}P$. Then $J$ is a connected open subset of $X$. Clearly, $J$ contains no simple troid and $x\notin {\rm End}(X)$ for every $x\in J$. Thus $J$ is a free open interval by Lemma~\ref{L2.4}, and the maximality of $J$ is implied by the definition of $J$.
\end{proof}

\noindent {\bf Remark.} If $X$ is a nondegenerate Peano continuum having a free arc, which is not a circle,
then it is not difficult to see that two maximal free open intervals in $X$ either are disjoint or coincide.

%%%%%%%%%%%%%%%%%%%%%%%%%%%%%%%%%%%%%%%%%%%%%%%%%%%%%%%%%%
\section{Open maps}
%%%%%%%%%%%%%%%%%%%%%%%%%%%%%%%%%%%%%%%%%%%%%%%%%%%%%%%%%%
In this section, we introduce some lemmas about open maps on Peano continua. The first lemma is from \cite[Lemma 3.2]{SWM}

\begin{lemma}\label{L3.1}
Let $J$ be the open interval $(0,1)$ and let $X$ be a metric
 space. If $f:J\to X$ is an open map, then $f(J)$ contains no
 simple troid.
\end{lemma}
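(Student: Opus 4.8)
The plan is to argue by contradiction. Suppose $f(J)$ contains a simple triod $T$ with center $o$ and open legs $L_1,L_2,L_3$ (the three components of $T\setminus\{o\}$). Choose $p\in J$ with $f(p)=o$. Since $f$ is open, the image of every interval about $p$ is an open neighbourhood of $o$ in $X$; shrinking, I fix a small interval $V=(p-\eta,p+\eta)$ together with a small triod $M=[o,b_1]\cup[o,b_2]\cup[o,b_3]$ of $o$ inside $T$ so that, on the one hand, $f(V)$ covers a full triod neighbourhood of $o$ (all three legs are met near $o$), and, on the other hand, by continuity $f(V)$ reaches $T$ only along these three legs. Note that $f|_V$ is again open.

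Next I examine the fibre $Z=f^{-1}(o)\cap V$, a closed subset of $V$ containing $p$, and the components (the \emph{gaps}) of $V\setminus Z$. On a gap $G$ the restriction $f|_G$ is an open map of an interval, and $f(G)$ is open, connected and omits $o$. The decisive structural point is that $f(G)$ lies in a single open leg $M_i\setminus\{o\}$. Granting this, $f|_G$ becomes a continuous open map from an interval into a copy of $(0,1)\subseteq\mathbb{R}$, and is therefore strictly monotone. For an \emph{interior} gap $G=(u,w)$ one has $f(u)=f(w)=o$, so $f$ would have to move monotonically away from the leg-endpoint $o$ at $u^{+}$ and yet return to $o$ at $w^{-}$; a strictly monotone function cannot have equal one-sided limits at both ends unless it is constant, which contradicts $o\notin f(G)$. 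Hence there are no interior gaps, so $V\setminus Z$ has at most two components.

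From this the contradiction follows quickly. If $Z$ has nonempty interior, then $f$ is constantly equal to $o$ on an open subinterval, whose image $\{o\}$ is not open in $X$, contradicting openness. Otherwise $Z$ reduces to at most a single point separating at most two gaps, so by the single-leg property $f(V)$ meets only two of the three legs near $o$, contradicting the fact that $f(V)$ covers all three legs. Either way a contradiction is reached, so $f(J)$ contains no simple triod. The main obstacle --- and the step where openness of $f$ into $X$ is used most essentially --- is the structural claim that each gap maps into a single leg: a priori the connected set $f(G)$ could join two legs through $X\setminus T$ (for instance along a small circle or bridge passing near $o$), and ruling out such an escape is exactly what must be secured by taking the neighbourhoods small and exploiting openness. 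This is the technical heart of the argument, and everything else is bookkeeping with the monotonicity of open interval maps.
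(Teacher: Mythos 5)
The paper does not prove this lemma at all --- it is quoted verbatim from \cite[Lemma 3.2]{SWM} --- so your attempt can only be judged on its own merits, and it has a genuine gap, one you yourself flag: the ``single-leg property'' that each component $G$ of $V\setminus f^{-1}(o)$ satisfies $f(G)\subset M_i\setminus\{o\}$ is announced, used (``Granting this\dots''), and then admitted at the end to be ``the technical heart'' that ``must be secured.'' Nothing in the write-up secures it, and in fact the claim is false as literally stated: $X$ is an arbitrary metric space, $f(G)$ is a nonempty \emph{open} subset of $X$, and the triod $T$ may have empty interior in $X$ (think of $X=\mathbb{R}^2$ and $T$ a triod in the plane), so $f(G)$ cannot be expected to lie in $T$ at all, let alone in one leg. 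Connectedness of $f(G)$ plus $o\notin f(G)$ only forces $f(G)$ into one component of $X\setminus\{o\}$, which may contain all three legs. Since every subsequent step (no interior gaps, hence at most two gaps, hence only two legs reached) rests on this unproved and, in general, false assertion, the argument does not close. There is also a smaller imprecision in the monotonicity step: $f|_G$ is open as a map into $X$, not into a copy of $(0,1)$, so ``open interval maps are strictly monotone'' does not apply directly; the correct local argument is that an interior maximum of the leg-coordinate would have image missing the points of $T$ just beyond it, which does work once the single-leg property is granted with $M$ strictly inside $T$ --- but that only underlines that everything hinges on the missing step.

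For comparison, here is a short argument that avoids the gap entirely and is likely close to what \cite{SWM} does. For any $[u,v]\subset(0,1)$ one has $\overline{f((u,v))}\subset f([u,v])$ by compactness, hence ${\rm Bd}_X\bigl(f((u,v))\bigr)\subset f([u,v])\setminus f((u,v))\subset\{f(u),f(v)\}$: the boundary of the image of any open subinterval has at most two points. Now suppose $T=[o,b_1]\cup[o,b_2]\cup[o,b_3]\subset f(J)$ with center $o$, pick $x_0\in f^{-1}(o)$, and set $W_\epsilon=f((x_0-\epsilon,x_0+\epsilon))$. By continuity ${\rm diam}\bigl(f([x_0-\epsilon,x_0+\epsilon])\bigr)\to 0$, so for small $\epsilon$ the open neighbourhood $W_\epsilon$ of $o$ satisfies $b_i\notin\overline{W_\epsilon}$ for each $i$; then each connected arc $[o,b_i]$ meets both $W_\epsilon$ and $X\setminus\overline{W_\epsilon}$ and therefore meets ${\rm Bd}_X(W_\epsilon)$ in a point of $L_i$, giving three distinct boundary points and contradicting the two-point bound. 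You may want to rebuild your proof around this boundary-cardinality observation; your fibre-and-gaps decomposition can probably also be completed with it (an interior gap $(s,t)$ has ${\rm Bd}_X(f((s,t)))\subset\{o\}$, forcing $f((s,t))$ to be a union of components of $X\setminus\{o\}$, which is incompatible with small diameter), but as submitted the proposal is incomplete.
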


\begin{lemma}\label{L3.2}
 Let $X$ be a continuum, and let $f:X\to X$ be an open map. If $a$ is an endpoint of $X$, then $f(a)$ is also an endpoint of $X$.
\end{lemma}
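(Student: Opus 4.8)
The plan is to verify the endpoint condition for $f(a)$ directly, by transporting through $f$ a witnessing neighbourhood of $a$. Note first that a one-point continuum has no endpoints, so the hypothesis forces $X$ to be nondegenerate. Fix an open set $W$ with $f(a)\in W$; since the case $W=X$ reduces to a proper neighbourhood (intersect $W$ with a small metric ball, using that $X$ is nondegenerate), I may assume $W\neq X$. By continuity $f^{-1}(W)$ is open and contains $a$, so the endpoint property of $a$ supplies an open set $V$ with $a\in V\subset f^{-1}(W)$ and ${\rm Bd}_X(V)=\{p\}$ for a single point $p$. Because $f$ is open, $V':=f(V)$ is open, and clearly $f(a)\in V'\subset W$. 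It then remains only to show that ${\rm Bd}_X(V')$ consists of exactly one point.

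First I would bound the boundary from above, proving ${\rm Bd}_X(V')\subseteq\{f(p)\}$. Since $V$ is open, $\overline{V}=V\cup\{p\}$. Let $q\in{\rm Bd}_X(V')$; as $V'$ is open we have $q\notin V'$ while $q\in\overline{V'}$, so there are points $x_n\in V$ with $f(x_n)\to q$. By compactness of $X$ a subsequence satisfies $x_n\to x\in\overline{V}=V\cup\{p\}$, and continuity gives $q=f(x)$. If $x\in V$ then $q=f(x)\in V'$, contradicting $q\notin V'$; hence $x=p$ and $q=f(p)$. This is the heart of the argument, and the only place where compactness and continuity are combined.

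Finally I would show the boundary is nonempty, so that it equals $\{f(p)\}$ exactly. The set $V'$ is open and nonempty, and $V'\subset W\subsetneq X$, so $V'\neq X$. If ${\rm Bd}_X(V')$ were empty, then $V'$ would be clopen, forcing $V'=X$ by connectedness of the continuum $X$, a contradiction. Therefore ${\rm Bd}_X(V')=\{f(p)\}$, which is the required witness for $W$; since $W$ was an arbitrary open neighbourhood of $f(a)$, this proves $f(a)\in{\rm End}(X)$. The main obstacle is precisely the degenerate possibility ${\rm Bd}_X(V')=\emptyset$, and the reduction to a proper $W$ together with connectedness is what rules it out; arranging that reduction cleanly is the one point requiring care.
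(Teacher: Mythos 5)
Your proof is correct and follows essentially the same route as the paper's: push the witnessing neighbourhood $V$ forward through the open map and use sequential compactness plus continuity to show ${\rm Bd}_X(f(V))\subseteq f({\rm Bd}_X(V))$. In fact you are slightly more careful than the paper, which concludes $|{\rm Bd}_X(f(V))|\le 1$ and asserts equality without ruling out the empty-boundary case that you dispose of via connectedness.
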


\begin{proof}
For any open neighborhood $U$ of $f(a)$, $f^{-1}(U)$ is an open neighborhood of $a$ by the continuity of $f$. As $a\in {\rm End}(X)$, there exists an open neighborhood $V$ of $a$ such that $V\subset f^{-1}(U)$ and $|{\rm Bd}_X(V)|=1$. By the openness of $f$, $f(V)$ is an open neighborhood of $f(a)$ and $f(V)\subset U$.

To complete the proof, we need to show $|{\rm Bd}_X(f(V))|=1$. In order to show this fact, it suffices to show that for each point $y\in {\rm Bd}_X(f(V))$, there is an $x\in {\rm Bd}_X(V)$ such that $f(x)=y$.
Let $y\in {\rm Bd}_X(f(V))$. There exists a point sequence $\{y_n: n\in {\mathbb N}\}\subset f(V)$ such that $y_n\to y$ as $n\to \infty$. Take $x_n\in f^{-1}(y_n)\cap V$ for every $n\in {\mathbb N}$. Then there are a subsequence $\{x_{n_k}: k\in {\mathbb N}\}\subset \{x_n:n\in {\mathbb N}\}$ and a point $x\in \overline{V}$ such that $x_{n_k}\to x$ as $n_k\to \infty$. It follows from the continuity of $f$ that $f(x)=y$. Since $y\in {\rm Bd}_X(f(V))$ and $f$ is open, we have $x\in {\rm Bd}_X(V)$.
This implies that $|{\rm Bd}_X(f(V))|\leq |{\rm Bd}_X(V)|$. Hence $|{\rm Bd}_X(f(V))|=1$. Thus we complete the proof.
\end{proof}

\begin{lemma}\label{L3.3}
 Let $X$ be a Peano continuum having a free arc, which is neither an arc nor a circle, and let $f:X\to X$ be an open map. If $A$ is a free arc in $X$ containing in a maximal free open interval $J$, then $f(A)$ is also a free arc in $X$.
\end{lemma}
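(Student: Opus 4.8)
The plan is to push the one-dimensional classification results (Lemmas~\ref{L3.1} and \ref{L2.4}) through the open map $f$, using the maximal free open interval $J$ as the ambient neighbourhood that controls the image of $A$. First I would restrict $f$ to $J$. Since $A\subseteq J$ and $J$ is a free open interval, $J$ is open in $X$ and homeomorphic to $(0,1)$, and the restriction $f|_J:J\to X$ is again open; hence Lemma~\ref{L3.1} shows that $f(J)$ contains no simple troid. As the continuous open image of the connected open set $J$, the set $f(J)$ is itself open and connected, so Lemma~\ref{L2.4} applies (this is where the hypothesis that $X$ is neither an arc nor a circle is used) and gives that $f(J)$ is homeomorphic either to $(0,1)$ or to $[0,1)$. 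The point of this step is that such a space contains no circle whatsoever, which is exactly what will prevent $f(A)$ from closing up.

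Next, since $A$ is compact and connected with $A\subseteq J$, its image $f(A)$ is a compact connected subset of $f(J)$. Every compact connected subset of a space homeomorphic to $(0,1)$ or $[0,1)$ is either a single point or an arc, namely a closed subinterval. To exclude the degenerate possibility I would note that $f$ is non-constant on $A$: writing $I=\mathrm{int}(A)$ for the free open interval with $\overline I=A$, the set $f(I)$ is a nonempty open subset of the nondegenerate Peano continuum $X$ and is therefore infinite, since $X$ has no isolated points. Consequently $f(A)$ is an arc.

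Finally I would check that this arc is free. Identifying $f(J)$ with $(0,1)$ or $[0,1)$, write $f(A)=[s,t]$ for the corresponding closed subinterval with $s<t$, and let $O$ be the open arc obtained by deleting the two endpoints of $f(A)$. In both cases $O$ is open in $f(J)$, hence open in $X$ as an open subset of an open subset, and $O$ is homeomorphic to $(0,1)$. Because $f(A)$ is closed in $X$ and $O\subseteq f(A)$, while the closure of $O$ taken inside $f(J)$ is already all of $f(A)$, one obtains $\overline O=f(A)$. Thus $O$ is a free open interval whose closure is $f(A)$, so $f(A)$ is a free arc.

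The main obstacle is the very first step. A priori $f(A)$ could be a circle, as the doubling map on ${\mathbb S}^1$ shows when it is restricted to a semicircle, and this cannot be ruled out by examining $f(A)$ in isolation. The resolving idea is to classify the larger image $f(J)$ first: once $f(J)$ is known to be homeomorphic to $(0,1)$ or $[0,1)$, it contains no circle, and the arc-versus-circle dichotomy for $f(A)$ collapses. The remaining steps, non-degeneracy and freeness, are then routine point-set verifications.
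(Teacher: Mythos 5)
Your proposal is correct and follows essentially the same route as the paper: classify $f(J)$ via Lemma~\ref{L3.1} and Lemma~\ref{L2.4} as a copy of $(0,1)$ or $[0,1)$, then observe that the compact connected image $f(A)$ inside it must be a free arc. You merely spell out the non-degeneracy and freeness verifications that the paper dismisses as obvious, and those details are accurate.
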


\begin{proof}
Notice that $f(J)$ is a connected open subset in $X$ by the continuity and openness of $f$. Furthermore, by Lemma~\ref{L3.1}, $f(J)$ contains no simple troid. Thus $f(J)$ is homeomorphic to $(0,1)$ or $[0,1)$ according to Lemma~\ref{L2.4}. Since $f(A)$ is a connected compact subset in $f(J)$, it is obvious that $f(A)$ is a free arc in $X$.
\end{proof}

Let $f:I\to I$ be a continuous map on an arc $I$. A point $c$ is called a {\it turning point} of $f$ if $f$ has a local extremum at $c$ (with respect to the nature ordering on $I$) and $c$ is in the interior of $I$.

\begin{lemma}\label{L3.4}
 Let $X$ be a Peano continuum having a free arc, which is neither an arc nor a circle. Suppose that there is no maximal free open interval of Type I in $X$. If $J$ is a maximal free open interval in $X$, and $f:X\to X$ is an open map, then there is a maximal free open interval $K$ such that $f(J)\subset K$, and $f: J\to f(J)\subset K$ is injective.
\end{lemma}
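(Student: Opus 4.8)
The plan is to first show that $f(J)$ is itself a free open interval, then obtain $K$ from Lemma~\ref{L2.5}, and finally deduce injectivity from a monotonicity argument for open maps between intervals.

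First I would observe that, since $f$ is open and $J$ is open in $X$, the restriction $f|_J\colon J\to X$ is again open, so $f(J)$ is a connected open subset of $X$; by Lemma~\ref{L3.1} it contains no simple triod. Lemma~\ref{L2.4} then forces $f(J)$ to be homeomorphic either to $(0,1)$ or to $[0,1)$. In the first case $f(J)$ is a free open interval, and Lemma~\ref{L2.5} immediately supplies the unique maximal free open interval $K\supset f(J)$. Thus the whole difficulty in the existence part is to exclude $f(J)\cong[0,1)$.

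To rule out the half-open case I would argue as follows. Let $e\in f(J)$ correspond to the point $0$. Inspecting the neighbourhoods of $e$ of the form $[0,\varepsilon)$ inside $f(J)$ shows they are open in $X$ with one-point boundary, so $e\in{\rm End}(X)$. Then $f(J)\setminus\{e\}\cong(0,1)$ is a free open interval, lying in a unique maximal free open interval $K$ by Lemma~\ref{L2.5}; since $e$ is a limit of this free interval but, being an endpoint of $X$, cannot be an interior point of any free open interval, we get $e\in\overline K\setminus K$. Now I would split on the type of $\overline K$. If $\overline K$ is an arc, then $e$ is one of its two endpoints and lies in ${\rm End}(X)$, so $K$ is a maximal free open interval of Type I, contradicting the hypothesis. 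If $\overline K$ is a circle with $\overline K\setminus K=\{c\}$, then $e=c$; but $\overline K$ approaches $c$ from two sides while $f(J)\cong[0,1)$ meets any neighbourhood of $e$ from one side only, so $f(J)$ can contain no full $X$-neighbourhood of $e$ and hence fails to be open there, contradicting the openness of $f(J)$. Either way $f(J)\cong[0,1)$ is impossible, so $f(J)\cong(0,1)$ and $K$ exists. I expect this exclusion of the half-open case, and in particular the circle sub-case, to be the main obstacle.

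For injectivity I would transport $f|_J$ through the homeomorphisms $J\cong(0,1)$ and $f(J)\cong(0,1)$ to a continuous open surjection $g\colon(0,1)\to(0,1)$. Such a $g$ can have no turning point, for at an interior local extremum $c$ the image of a small neighbourhood of $c$ would lie to one side of $g(c)$ while containing $g(c)$, and so would not be open, contradicting the openness of $g$. A continuous function on an interval with no interior local extremum is strictly monotone, so $g$, and therefore $f|_J$, is injective, which completes the argument.
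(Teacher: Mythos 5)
Your proof is correct and follows essentially the same route as the paper's: both show $f(J)$ is open, connected and triod-free, invoke Lemmas~\ref{L2.4} and \ref{L2.5} to place $f(J)$ inside $\overline K$, use the absence of Type I intervals (via Lemma~\ref{L2.3} and the endpoint structure) to exclude the half-open/boundary-point possibility, and derive injectivity from the incompatibility of an interior turning point with openness. The only divergence is cosmetic: in the circle sub-case the paper contradicts ``$X$ is not a circle'' by forcing $X=\overline K$, whereas you contradict the openness of $f(J)$ at the attaching point --- both are valid.
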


\begin{proof}
It is not difficult to check that $f(J)$ is a connected open subset in $X$ since $f$ is a continuous open map. According to Lemma~\ref{L3.1}, $f(J)$ contains no simple troid. Thus by Lemma~\ref{L2.4} and Lemma~\ref{L2.5}, there exists a maximal free open interval $K$ such that $f(J)\subset \overline{K}$. Since there is no maximal free open interval of Type I in $X$, we can show that $f(J)\subset K$.
To prove this fact, we distinguish two cases:

\noindent{\bf Case 1.}\quad $\overline{K}=K\cup\{c\}$ is a circle for $c\in X\setminus K$. If $c\in f(J)$, since $f(J)$ contains no simple troid, then $\overline{K}$ is open in $X$ by Lemma~\ref{L2.3}. Thus $X=\overline{K}$ by the connectedness of $X$, which is a contradiction. Hence $f(J)\subset K$.

\noindent{\bf Case 2.}\quad $\overline{K}=K\cup\{a,b\}$ is an arc for $a,b\in X\setminus K$ and $\{a,b\}\cap {\rm End}(X)=\emptyset$. If $a\in f(J)$, since $f(J)$ contains no simple troid, then according to Lemma~\ref{L2.3} and the maximality of $K$, $a\in {\rm End}(X)$, which is a contradiction. Hence $a\notin f(J)$. Similar arguments show that $b\notin f(J)$. Hence $f(J)\subset K$.

Now we prove that $f: J\to f(J)\subset K$ is injective.
Assume to the contrary that $f:J\to f(J)\subset K$ is not injective. Then there are two points $x_1, x_2\in J$ such that $f(x_1)=f(x_2)$. Thus there exists a turning point $z\in (x_1,x_2)_J$ such that $f(z)\in K$, which contradicts the openness of $f$. Thus $f:J\to f(J)$ is injective.
\end{proof}

\begin{lemma}\label{L3.5}
 Let $X$ be a Peano continuum having a free arc, which is neither an arc nor a circle. If $J$ is a maximal free open interval of Type I in $X$, and $f:X\to X$ is an open map, then there is a maximal free open interval $K$ of Type I in $X$ such that $f(J)\subset K$, and $f: J\to f(J)\subset K$ is injective.
\end{lemma}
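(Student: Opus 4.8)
The plan is to follow the skeleton of the proof of Lemma~\ref{L3.4}, modifying the two steps where the Type~I hypothesis changes the local picture. Since $f$ is continuous and open, $f(J)$ is a connected open set, and by Lemma~\ref{L3.1} it contains no simple troid; hence by Lemma~\ref{L2.4} it is homeomorphic to $(0,1)$ or to $[0,1)$. In either case the free open interval part of $f(J)$ is contained, by Lemma~\ref{L2.5}, in a unique maximal free open interval $K$, and $f(J)\subseteq\overline{K}$. To see that $K$ is of Type~I, I would exploit the distinguished endpoint of $J$: as $J$ is of Type~I there is an endpoint $a$ of $\overline{J}$ with $a\in{\rm End}(X)$, so $f(a)\in{\rm End}(X)$ by Lemma~\ref{L3.2}. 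Since $f(a)\in f(\overline{J})=\overline{f(J)}\subseteq\overline{K}$ and no point of $K$ is an endpoint of $X$, we get $f(a)\in{\rm Bd}_X(K)$. If $K$ were of Type~III, then ${\rm Bd}_X(K)=\{c\}$ with $c$ a two-sided point of the circle $\overline{K}$, so $c\notin{\rm End}(X)$; if $K$ were of Type~II, then ${\rm Bd}_X(K)$ misses ${\rm End}(X)$ altogether. Both possibilities contradict $f(a)\in{\rm End}(X)$, so $K$ is of Type~I.

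The crux is to upgrade $f(J)\subseteq\overline{K}$ to $f(J)\subseteq K$. Write ${\rm Bd}_X(K)=\{a',b'\}$ with $a'\in{\rm End}(X)$. If $f(J)$ is homeomorphic to $(0,1)$, then it is a free open interval inside $\overline{K}$ and so cannot contain the one-sided arc-endpoints $a',b'$; thus $f(J)\subseteq K$. The genuine difficulty is the case $f(J)\cong[0,1)$, in which the closed end of $f(J)$ is an endpoint of $X$ lying in ${\rm Bd}_X(K)$, that is, $f$ folds some interior point $z_0\in J$ onto $a'$. This is exactly where the proof of Lemma~\ref{L3.4} breaks down, because a fold onto an \emph{endpoint} of $X$ is locally an open map and hence is not forbidden by openness alone.

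To exclude this fold I would first rule out $a'$ and $b'$ both lying in ${\rm End}(X)$: using a single-boundary neighborhood of such an endpoint together with the freeness of $K$ (its interior contains no branch point), each of $a',b'$ would be interior to $\overline{K}$, so $\overline{K}$ would be clopen and $X=\overline{K}$ an arc, contrary to hypothesis. Therefore $b'\notin{\rm End}(X)$, and consequently $f(a)\in{\rm End}(X)\cap\overline{K}=\{a'\}$, giving $f(a)=a'=f(z_0)$. Identifying the arc $\overline{K}$ with $[0,1]$ so that $a'$ corresponds to $0$, the restriction of $f$ to $[a,z_0]_{\overline{J}}$ then takes values in $[0,1]$ with $f(a)=f(z_0)=0$. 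Either $f$ is constant $0$ on $[a,z_0]_{\overline{J}}$, forcing the open arc $(a,z_0)_{\overline{J}}\subseteq J$ to map onto the single point $a'$, which is impossible for an open map; or $f$ attains a positive interior maximum at some $w\in(a,z_0)_{\overline{J}}\subseteq J$. If $f(w)\in(0,1)$, i.e.\ $f(w)\in K$, this interior extremum maps to a two-sided point of $K$, contradicting openness exactly as in Lemma~\ref{L3.4}; if $f(w)=1$, i.e.\ $f(w)=b'$, then since $b'\notin{\rm End}(X)$ every neighborhood of $b'$ meets $X\setminus\overline{K}$, while the image of a small neighborhood of $w$ stays inside $\overline{K}$, so $f$ fails to be open at $w$. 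In all cases we reach a contradiction, so $f(J)\cong[0,1)$ cannot occur and $f(J)\subseteq K$. The injectivity of $f\colon J\to f(J)\subseteq K$ then follows verbatim as in Lemma~\ref{L3.4}: any coincidence $f(x_1)=f(x_2)$ with $x_1\neq x_2$ in $J$ would produce a turning point $z\in(x_1,x_2)_J$ with $f(z)\in K$, again contradicting openness. I expect the fold-exclusion---ruling out $f(J)\cong[0,1)$ onto the endpoint $a'$---to be the main obstacle, and it is resolved precisely by using the other endpoint $a$ of the Type~I arc $\overline{J}$ together with the fact that $b'$ is not an endpoint of $X$.
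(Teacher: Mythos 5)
Your proposal is correct and follows essentially the same route as the paper: reduce to $f(J)\subseteq\overline{K}$ as in Lemma~\ref{L3.4}, use Lemma~\ref{L3.2} on the distinguished endpoint of $\overline{J}$ to force $K$ to be of Type I, exclude the non-endpoint boundary point $b'$ of $K$ from $f(J)$ via Lemma~\ref{L2.3} and maximality, and rule out a fold onto the endpoint $a'$ by an interior-extremum (turning point) argument contradicting openness. The only cosmetic difference is that you organize the cases by the homeomorphism type of $f(J)$ and derive the final contradiction directly at the turning point, whereas the paper concludes $f(z)=b$ and contradicts the previously established $b\notin f(J)$.
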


\begin{proof} Since $J$ is a maximal free open interval of Type I, let $\overline{J}=J\cup\{u,v\}$, where $u,v\in X\setminus J$ and $u\in {\rm End}(X)$. For the same reason as in Lemma~\ref{L3.4}, there exists a maximal free open interval $K$ such that $f(J)\subset \overline{K}$. Since $u$ is an endpoint of $X$, then $f(u)\in {\rm End}(X)$ by Lemma~\ref{L3.2}. It implies that $K$ is a maximal free open interval of Type I in $X$. Let $\overline{K}=K\cup \{a,b\}$, where $a, b\in X\setminus K$ and $a=f(u)\in {\rm End}(X)$. In order to prove $f(J)\subset K$, it suffices to prove that $a,b\notin f(J)$. If not,
assume that $b\in f(J)$. According to Lemma~\ref{L2.3}, Lemma~\ref{L3.1} and the maximality of $K$, $b$ is an endpoint of $X$. It follows that $X=\overline{K}$, which is a contradiction. Therefore $b\notin f(J)$. On the other hand, if assume that $a\in f(J)$, which means there is a point $x\in J$ such that $f(x)=a=f(u)$, then $(x,u)_J$ contains a turning point $z$ such that $f(z)\in \overline{K}$. By the openness of $f$, we have $f(z)=b$.
Thus $b\in f(J)$. However, we have verified that $b\notin f(J)$. Hence $a\notin f(J)$ as well. Thus $f(J)\subset K$.

By the same argument as in the proof of Lemma~\ref{L3.4}, we have that
$f:J\to f(J)$ is injective. This completes the proof.
\end{proof}

%%%%%%%%%%%%%%%%%%%%%%%%%%%%%%%%%%%%%%%%%%%%%%%%%%%%%%%%%%
\section{Proof of the main theorem}
%%%%%%%%%%%%%%%%%%%%%%%%%%%%%%%%%%%%%%%%%%%%%%%%%%%%%%%%%%
In this section, we are ready to show the main theorem. First let us introduce some definitions.

Let $A$ be an arc. Suppose that $x_0,x_1,\cdots, x_m$ are points
in $A$ with ${\rm End}(A)=\{x_0,x_m\}$. If $x_0\prec x_1\prec
x_2\prec\cdots\prec x_{m-1}\prec x_m$ with respect to a natural
ordering $\prec$ on $A$, and $d(x_k,x_0)=k\cdot d(x_m,x_0)/m$ for
every $k=0,1,\cdots, m$, then the sequence
$\{x_1,x_2,\cdots,x_{m-1}\}$ is said to be {\it a set of
pseudo-m-section points of $A$ from $x_0$ to $x_m$}.

Let $X$ be a metric space with metric $d$. The collection of all non-empty compact subsets of $X$ is called to be the {\it hyperspace} of $X$ and denoted by $2^X$. For $A,B\in 2^X$ define the Hausdorff metric $d_H(A,B)$ between $A$ and $B$ by $d_H(A,B)={\rm inf}\{\epsilon >0: A\subset D_{\epsilon}(B) {\rm \ and\ }B\subset D_{\epsilon}(A)\}$. Now let us prove Theorem~\ref{T1.1}.

\begin{proof}[Proof of Theorem 1.1]
Assume to the contrary that $X$ is neither an arc, nor a circle. Let $c$ be a sensitive constant of $F$, and denote $\epsilon=c/7$. Let $\delta=\delta(\epsilon)$ be as the statement in Lemma~\ref{L2.1}. Since $X$ is compact and locally arcwise connected, there are finitely many arcwise connected open subsets $U_1,\cdots, U_n$ of $X$ such that $\cup_{i=1}^{n}U_i=X$ and ${\rm diam}(U_i)<\epsilon$ for each $1\leq i \leq n$.

For any set of open maps $\{f_1,\cdots, f_m\}\subset F$, define
$$S(f_1,\cdots,f_m)=\sum_{i=1}^{n}d(U_i, \cup_{j=1}^{m}f_j(U_i)) \eqno(4.1)$$

For an arc $A\subset X$, if $A\cap f_j(A)=\emptyset$ for all $1\leq j \leq m$, then $A$ is called to be a {\it jumping arc of $\{f_1,\cdots,f_m\}$}.

To show this theorem, we should prove some claims. The first claim is obvious.\vspace{1mm}

\noindent{\it Claim A.} If $A$ is a jumping arc of $\{f_1, \cdots, f_m\}$, then any subarc of $A$ is also a jumping arc of $\{f_1, \cdots, f_m\}$.\vspace{1mm}

\noindent{\it Claim B.} Let $A$ be a free arc in a maximal free open interval $J$. Suppose that $A$ is a jumping arc of $\{f_1,\cdots, f_m\}$, and $A,f_1(A),\cdots,f_m(A)$ are contained in the same maximal free open interval $J$. If $g: J\to g(J)$ is injective for $g\in F$, then $g(A)$ is also a jumping arc of $\{f_1,\cdots,f_m\}$.\vspace{1mm}

\noindent{\it Proof of Claim B.} By Lemma~\ref{L3.3}, $g(A)$ is a free arc in $X$. Since $F$ is a commutative semigroup and $g$ is injective on $J$, then we have
$$g(A)\cap f_j(g(A))=g(A)\cap g(f_j(A))=g(A\cap f_j(A))=\emptyset,\  {\rm for\ each\ }\ 1\leq j\leq m.$$
So $g(A)$ is a jumping arc of $\{f_1,\cdots,f_m\}$.\vspace{1mm}

 Applying the sensitivity of $F$ and Lemma~\ref{L3.3}, we have \vspace{1mm}

 \noindent{\it Claim C.} Let $K_0$ be a free arc in a maximal free open interval $J$. Then for every $i\in {\mathbb N}$, there exist $g_{i-1}, L_{i-1}, I_i, y_{i0},\cdots, y_{i7}$ and $K_i$ satisfying the following conditions:

 (i) $g_{i-1}\in F$, $L_{i-1}$ is a subarc of $K_{i-1}$, $I_i=g_{i-1}(L_{i-1})$ is a free arc in $X$, ${\rm End}(I_i)=\{y_{i0}, y_{i7}\}$, and $d(y_{i0}, y_{i7})=c=7\epsilon$.

 (ii) $\{y_{i1},\cdots, y_{i6}\}$ is the set of pesudo-7-section points of $I_i$ from $y_{i0}$ to $y_{i7}$, and $K_i=[y_{i3}, y_{i4}]_{I_i}$. \vspace{1mm}

In the following, we will discuss in two cases.

\noindent{\bf Case 1.} There is no maximal free open interval of Type I in $X$.

 In this case, according to Lemma~\ref{L3.4}, we see that for each maximal free open interval $J$ and each open map $g\in F$, $g:J\to g(J)$ is injective. Basing on this fact we can show the following two claims.\vspace{1mm}

\noindent{\it Claim D.} There exist a free arc $A_1$ of $X$ and an $f_1\in F$ such that $A_1$ is a jumping arc of $f_1$ with $A_1$ and $f_1(A_1)$ being contained in a same maximal free open interval.\vspace{1mm}

\noindent{\it Proof of Claim D.} Let $J$ be a maximal free open interval in $X$, and take a free arc $K_0\subset J$. According to Claim C, there exist $g_{i-1}$, $L_{i-1}$, $I_i$, $y_{i0},\cdots,y_{i7}$ and $K_i$ satisfying the conditions (i) and (ii) for every positive integer $i$. By the compactness of the hyperspace $(2^X, d_H)$, there exist integers $q>p>0$ such that
 $$d_H(K_p,K_q)<\delta.\eqno(4.2)$$
 By $(4.2)$ and Lemma~\ref{L2.2}, we have $K_p\subset (y_{q2},y_{q5})_{I_q}$. Let $f_1=g_{q-1}\cdots g_{p}$. Since $[y_{q0}, y_{q2}]_{I_q}\subset I_q\subset f_1(K_p)$, there is a free arc $A_1\subset K_p$ such that $f_1(A_1)=[y_{q0},y_{q2}]_{I_q}$. Thus $f_1(A_1)\cap K_p=\emptyset$, and so $f_1(A_1)\cap A_1=\emptyset$. That means $A_1$ is a jumping arc of $f_1$. By Lemma~\ref{L3.4}, there exist maximal free open intervals $J_p$ and $J_q$ such that $g_{p-1}\cdots g_0(J)\subset J_p$ and $g_{q-1}\cdots g_0(J)\subset J_q$. Since $K_p\subset J_p$ and $K_p\subset J_q$, then $J_p=J_q$. Obviously, $A_1\subset J_p$ and $f_1(A_1)\subset J_q$, which means $A_1$ and $f_1(A_1)$ are contained in the same maximal free open interval.\vspace{1mm}

 \noindent{\it Claim E.} Let $f_1,\cdots, f_m\in F$. If there is a free arc $A_m\subset X$ such that $A_m$ is a jumping arc of $\{f_1,\cdots,f_m\}$, and $A_m, f_1(A_m), \cdots, f_m(A_m)$ are contained in a same maximal free open interval, then there exist a free arc $A_{m+1}$ and $f_{m+1}\in F$ such that $A_{m+1}$ is a jumping arc of $\{f_1,\cdots, f_m,f_{m+1}\}$, and $A_{m+1}, f_1(A_{m+1}),\cdots,$ $f_{m+1}(A_{m+1})$ are contained in a same maximal free open interval. Moreover,
 $$S(f_1,\cdots,f_m,f_{m+1})<S(f_1,\cdots,f_m)-\delta.\eqno(4.3)$$

 \noindent{\it Proof of Claim E.} Let $K_0=A_m$. By Claim C, for each $i\in {\mathbb N}$, there are $g_{i-1}, L_{i-1}, I_i, y_{i0},\cdots y_{i7}$ and $K_i$ satisfying the conditions (i) and (ii). Similar to Claim D, there are integers $t>s>0$ such that $d_H(K_s,K_t)<\delta$. By Lemma~\ref{L2.2}, $K_s\subset (y_{t2},y_{t5})_{I_t}$. Let $f_{m+1}=g_{t-1}\cdots g_s$. Since $[y_{t0},y_{t2}]_{I_t}\subset f_{m+1}(K_s)$, there is a free arc $A_{m+1}\subset K_s$ such that $f_{m+1}(A_{m+1})=[y_{t0},y_{t2}]_{I_t}$. Thus $f_{m+1}(A_{m+1})\cap K_s=\emptyset$. So $f_{m+1}(A_{m+1})\cap A_{m+1}=\emptyset$. By Lamma~\ref{L3.4}, $g_{s-1}\cdots g_0$ is injective on $J$. Since $A_m$ is a jumping arc of $\{f_1,\cdots, f_m\}$, and $A_m, f_1(A_m), \cdots, f_m(A_m)$ are contained in a same maximal free open interval, then $g_{s-1}\cdots g_0(A_m)$ is also a jumping arc of $\{f_1,\cdots f_m\}$ by Claim B. Since $A_{m+1}$ is a subarc of $g_{s-1}\cdots g_0(A_m)$, then $A_{m+1}$ is also a jumping arc of $\{f_1,\cdots, f_m\}$ by Claim A. Hence, $A_{m+1}$ is a jumping arc of $\{f_1,\cdots,f_m, f_{m+1}\}$.

Now we prove that $A_{m+1}, f_1(A_{m+1}),\cdots, f_{m+1}(A_{m+1})$ are in a same maximal free open interval.
 Since $K_s\subset (y_{t2}, y_{t5})_{I_t}\subset I_t\subset f_{m+1}(L_s)\subset f_{m+1}(K_s)$, then there exists a fixed point $w\in K_s$ such that $f_{m+1}(w)=w$. Let $J$ be the maximal free open interval which contains $A_m, f_1(A_m), \cdots, f_m(A_m)$. By Lemma~\ref{L3.4}, there exist maximal free open intervals $J_s$ and $J_t$ such that $g_{s-1}\cdots g_{0}(J)\subset J_s$ and $g_{t-1}\cdots g_{0}(J)\subset J_t$. Since $w\in J_s$ and $w=f_{m+1}(w)\in J_t$, then $J_s= J_t$. Notice that $A_{m+1}\subset K_s\subset J_s$, $f_{m+1}(A_{m+1})\subset f_{m+1}(J_s)\subset J_t$. In addition, $f_i(A_{m+1})\subset f_i(g_{s-1}\cdots g_0(A_m))=g_{s-1}\cdots g_0(f_i(A_m))\subset g_{s-1}\cdots g_0(J)\subset J_s$ for every $i=1,\cdots,m$. Therefore $A_{m+1}, f_1(A_{m+1}), \cdots, f_{m+1}(A_{m+1})$ are contained in the same maximal free open interval $J_s$.

 In the following, we show $(4.3)$ holds. Take $k\in \{1,2,\cdots,n\}$ such that $w\in U_k$. Then $U_k\cap f_{m+1}(U_k)\neq\emptyset$. So $d(U_k, f_{m+1}(U_k))=0$. On the other hand, Since $U_k$ is arcwise connected such that ${\rm diam}(U_k)<\epsilon$ and $U_k\cap (y_{t2}, y_{t5})_{I_t}\neq\emptyset$, then $U_k\subset [y_{t1},y_{t6}]_{I_t}\subset I_t$. Notice that $I_t\subset g_{t-1}\cdots g_0(A_m)$, and $g_{t-1}\cdots g_0$ is injective on $J$, then by Claim A and Claim B, $I_t$ is a jumping arc of $\{f_1,\cdots, f_m\}$. So $\cup_{j=1}^{m}f_j(U_k)\subset \cup_{j=1}^{m}f_j(I_t)\subset X-I_t$. Then it follows from Lemma~\ref{L2.2} that $d(U_k, \cup_{j=1}^{m}f_j(U_k))>\delta$. Hence,
\begin{align*}
& S(f_1,\cdots,f_m)-S(f_1,\cdots, f_m,f_{m+1})\\
=& \sum_{i=1}^{n} d(U_i,\cup_{j=1}^{m}f_j(U_i))-\sum_{i=1}^{n} d(U_i,\cup_{j=1}^{m+1}f_j(U_i))\\
=& \sum_{i=1}^{n}\Big[d(U_i,\cup_{j=1}^{m}f_j(U_i))-d(U_i,\cup_{j=1}^{m+1}f_j(U_i))\Big]\\
\geq & d(U_k,\cup_{j=1}^{m}f_j(U_k))-d(U_k,\cup_{j=1}^{m+1}f_j(U_k))\\
=& d(U_k,\cup_{j=1}^{m}f_j(U_k))\\
>& \delta.
\end{align*}
Thus the proof of Claim E is complete.\vspace{1mm}

\noindent{\bf Case 2.} There exist maximal free open intervals of Type I in $X$.

In this case, we can also have two claims similar to Claim D and Claim E. Notice that a key point in the proofs of Claim D and Claim E is the injectivity of $g$ on $J$ for any $g\in F$ and any maximal free open interval $J$ in $X$. In this case, we begin our discussion with choosing a maximal free open interval $J$ of Type I and taking a free arc $K_0\subset J$. According to Lemma~\ref{L3.5}, for any $g\in F$ and any maximal free open interval $J$ of Type I, $g:J\to g(J)$ is injective. Moreover, $g(J)$ is also contained in a maximal free open interval of Type I. So we can ensure that all free arcs we discussed in the following proofs are contained in maximal free open intervals of Type I, and all open maps are injective on any maximal free open interval of Type I. Thus using arguments similar to those in Case 1, we have the following claims.\vspace{1mm}

\noindent{\it Claim D$'$.} There exist a free arc $A_1$ of $X$ and an $f_1\in F$ such that $A_1$ is a jumping arc of $f_1$ with $A_1$ and $f_1(A_1)$ being contained in a same maximal free open interval of Type I.\vspace{1mm}

 \noindent{\it Claim E$'$.} Let $f_1,\cdots, f_m\in F$. If there is a free arc $A_m\subset X$ such that $A_m$ is a jumping arc of $\{f_1,\cdots,f_m\}$, and $A_m, f_1(A_m), \cdots, f_m(A_m)$ are contained in a same maximal free open interval of Type I, then there exist a free arc $A_{m+1}$ and $f_{m+1}\in F$ such that $A_{m+1}$ is a jumping arc of $\{f_1,\cdots, f_m,f_{m+1}\}$, and $A_{m+1}, f_1(A_{m+1}),\cdots, f_{m+1}(A_{m+1})$ are contained in a same maximal free open interval of Type I. Moreover,
 $$S(f_1,\cdots,f_m,f_{m+1})<S(f_1,\cdots,f_m)-\delta.$$

Finally, let us finish the proof of this theorem. Denote ${\rm diam}(X)=r$. Then for any $f\in F$, we have $S(f)\leq nr$. Take an integer $l>nr/\delta$. By Claim D and Claim E (for Case 1), or by Claim D$'$ and Claim E$'$ (for Case 2), there are $f_1,f_2,\cdots, f_{l+1}\in F$ such that $S(f_1,\cdots, f_j,f_{j+1})<S(f_1,\cdots, f_j)-\delta$ for all $1\leq j\leq l$. This implies that
$$S(f_1,\cdots, f_l,f_{l+1})<S(f_1)-l\delta\leq nr-l\delta<0,$$
However, it follows from the definition of (4.1) that $S(f_1,\cdots,f_l,f_{l+1})\geq 0$, which is a contradiction. The proof is complete.
\end{proof}

\noindent{\Large\bf Acknowledgements}\vspace{2mm}

The first author is supported by NSFC (No. 11401263) and the second author is supported by NSFC (No. 11771318, No. 11790274).

\end{document}